\newtheorem{theorem}{Theorem}[section]
\newtheorem{lemma}[theorem]{Lemma}
\newtheorem{Problem}[theorem]{Problem}
\theoremstyle{remark}
\newtheorem*{remark}{Remark}
\numberwithin{equation}{subsection}
\begin{document}
\title[Holomorphic isometric embeddings]{Remarks on holomorphic isometric embeddings between bounded symmetric domains}
\author{Shan Tai Chan}
\curraddr{}
\address{Department of Mathematics, Syracuse University, Syracuse, NY 13244-1150, USA.}
\email{schan08@syr.edu}
\thanks{}

\subjclass[2010]{Primary 53C55, 32H02, 32M15}

\begin{abstract}
In this article, we study holomorphic isometric embeddings between bounded symmetric domains. In particular, we show the total geodesy of any holomorphic isometric embedding between reducible bounded symmetric domains with the same rank.
\end{abstract}
\maketitle
\setcounter{page}{1}
\section{Introduction}
In \cite{Mok12}, Mok established an (algebraic) extension theorem of germs of holomorphic isometric embeddings between bounded symmetric domains with respect to their Bergman metrics up to a normalizing constant.
It is known from the proof of the Hermitian Metric Rigidity Theorem on Hermitian locally symmetric spaces (cf. \cite{Mok89}) that such a holomorphic isometric embedding $f$ is totally geodesic whenever all irreducible factors of the domain of $f$ are irreducible bounded symmetric domains of rank $\ge 2$ (cf. \cite[Theorem 1.3.2]{Mok12}).
Therefore, the focus on the study of holomorphic isometric embeddings between bounded symmetric domains has been restricted to the case where the domain of the embedding is the complex unit ball, i.e., a rank-$1$ bounded symmetric domain (see \cite{CM17, XY16a, XY16b, YZ12}).
In \cite{YZ12}, Yuan and Zhang proved the totally geodesy of any holomorphic isometric embedding from a complex unit $n$-ball, $n\ge 2$, to a product of complex unit balls with respect to the canonical K\"ahler metrics.
Recently, the author, Xiao and Yuan \cite{CXY17} have generalized this result, namely, we have proven the total geodesy of any holomorphic isometric embedding from a product of complex unit $n_j$-balls for $1\le j\le k$, and $n_j\ge 2$ for all $j$, into a product of complex unit balls with respect to the canonical K\"ahler metrics.

In general, there are nonstandard (i.e., not totally geodesic) holomorphic isometric embeddings from a complex unit ball into a bounded symmetric domain of rank $\ge 2$ (see \cite{Ch16, Ch17b, CY17, Mok16, Ng10, UWZ17}).
However, it is expected that under certain assumptions on the domain and target bounded symmetric domains of a holomorphic isometric embedding $f$, we can still obtain some rigidity results for $f$.
In particular, we consider holomorphic isometric embeddings between bounded symmetric domains with the same rank.
In the present article, we denote by $g_D$ the canonical K\"ahler-Einstein metric on an irreducible bounded symmetric domain $D\Subset \mathbb C^n$ normalized so that the minimal disks of $D$ are of constant Gaussian curvature $-2$. We refer the readers to Mok-Tsai \cite{MT92} for the notion of minimal disks of bounded symmetric domains. The main result of the present article is the following.
\begin{theorem}[Main Theorem]\label{HI_TotalGeodesy_EqRk}
Let $D=D_1\times\cdots \times D_k$ and $\Omega=\Omega_1\times\cdots \times \Omega_m$ be bounded symmetric domains in their Harish-Chandra realizations such that $\mathrm{rank}(\Omega)=\mathrm{rank}(D)\ge 2$, where $D_j$, $1\le j\le k$, $\Omega_l$, $1\le l\le m$, are irreducible bounded symmetric domains.
Let $F:D \to \Omega$ be a holomorphic isometric embedding from $(D_1,\lambda_1 g_{D_1})\times\cdots \times (D_k,\lambda_k g_{D_k})$ to $(\Omega_1,\mu_1 g_{\Omega_1})\times\cdots \times (\Omega_m,\mu_m g_{\Omega_m})$ for some positive real constants $\lambda_j$, $1\le j\le k$, and $\mu_l$, $1\le l\le m$.
Then, $F$ is totally geodesic.
\end{theorem}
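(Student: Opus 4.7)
The plan is to decompose $D$ as a product of its rank-$\ge 2$ irreducible factors and its rank-$1$ (complex ball) factors, apply the Hermitian Metric Rigidity to the rank-$\ge 2$ part by slicing $F$, and then exploit the rank equality $\mathrm{rank}(D)=\mathrm{rank}(\Omega)$ together with the theorem of Chan--Xiao--Yuan \cite{CXY17} to handle the ball part. Concretely, write $D=D'\times D''$, where $D'$ is the product of all irreducible factors of $D$ of rank $\ge 2$ and $D''=\mathbb{B}^{n_1}\times\cdots\times\mathbb{B}^{n_p}$ is the product of the rank-$1$ factors. For each fixed $w\in D''$ the slice $F_w:=F(\cdot,w):D'\to\Omega$ is a holomorphic isometric embedding whose source has only rank-$\ge 2$ irreducible factors, so \cite[Theorem 1.3.2]{Mok12} gives that $F_w$ is totally geodesic for every $w\in D''$. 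Decomposing each such $F_w$ along the irreducible factors of $\Omega$, each irreducible summand $D'_j$ of $D'$ is embedded as a product of standard totally geodesic copies inside a subcollection of the irreducible factors of $\Omega$.

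Next, I would argue that this family of totally geodesic embeddings is rigid in $w$. The choice of target subcollection absorbing each $D'_j$ is discrete combinatorial data, hence locally constant in $w$ by holomorphic dependence, and the continuous parameters describing the individual totally geodesic slices are then pinned down by the product isometric condition on $F$. This should yield a splitting $\Omega=\Omega'\times\Omega''$ and a decomposition $F=(G,H)$ in which $G:D'\to\Omega'$ is a fixed totally geodesic isometric embedding and $H:D''\to\Omega''$ is a holomorphic isometric embedding. Comparing ranks gives $\mathrm{rank}(\Omega')\ge\mathrm{rank}(D')$ and $\mathrm{rank}(\Omega'')\ge\mathrm{rank}(D'')=p$; summing forces equalities, so $\mathrm{rank}(\Omega'')=p$.

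With $\Omega''$ having rank equal to the number of ball factors of $D''$, a polydisk and characteristic-variety analysis — using that a holomorphic isometric embedding must carry characteristic (minimal holomorphic sectional curvature) directions to characteristic directions of the target — forces $\Omega''$ to be itself a product of $p$ complex balls. The map $H$ is then a holomorphic isometric embedding from a product of complex balls to a product of complex balls, to which \cite{CXY17} applies provided each $n_j\ge 2$; the disk factors $\mathbb{B}^1$ that may appear must be treated separately via the classical disk-polydisk isometric rigidity. Combining the total geodesy of $G$ with that of $H$ gives total geodesy of $F$.

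The main obstacle I foresee is the rigidity step producing the global product decomposition $\Omega=\Omega'\times\Omega''$ compatibly with $F$: one must promote the fiberwise total geodesy of $\{F_w\}_{w\in D''}$ to a single totally geodesic embedding $G$ on $D'$ plus a transverse component $H$, using the orthogonality of the $D'$- and $D''$-directions in the product K\"ahler metric and the matching product structure on $\Omega$. A secondary but real subtlety is forcing $\Omega''$ to be a product of balls rather than a higher-rank bounded symmetric domain; once both are resolved, the theorem reduces to settings already handled in the literature.
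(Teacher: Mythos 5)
There is a genuine gap, and it sits exactly where you flag your ``main obstacle'': the passage from the fiberwise total geodesy of the slices $F_w:D'\to\Omega$ to a splitting $\Omega=\Omega'\times\Omega''$ with $F=(G,H)$, $G$ independent of $w$. Nothing in your argument produces this. The ``combinatorial data'' (which irreducible factors of $\Omega$ receive $D'_j$) being locally constant does not control the continuous moduli: a totally geodesic copy of $D'_j$ inside an irreducible factor $\Omega_l$ moves in a positive-dimensional family, and a priori $F_w(D')$ can vary with $w$ inside the very same factors of $\Omega$ that you would like to assign to $\Omega''$. Decoupling the $D'$- and $D''$-dependence is equivalent to the vanishing of the mixed second fundamental form terms, i.e.\ it is part of what is to be proved; indeed a factorization statement of exactly this kind is raised as an open problem in the paper (Problem \ref{Problem_ReBSDtoIRRBSD2}) rather than proved. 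A second, independent error is the claim that $\mathrm{rank}(\Omega'')=p$ forces $\Omega''$ to be a product of $p$ balls: rank equality says nothing about irreducibility type ($\Omega''$ could be, say, an irreducible type-$\mathrm{IV}$ domain of rank $2$ receiving $\mathbb B^{q_1}\times\mathbb B^{q_2}$), so the reduction to \cite{CXY17} fails. Finally, there is no ``classical disk--polydisk isometric rigidity'' to fall back on for $\mathbb B^1$ factors --- nonstandard holomorphic isometries of $\Delta$ into polydisks exist (\cite{Ng10}, \cite{Ch17b}); what saves the day in the equal-rank setting is precisely the rank count, not a classical rigidity theorem.

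The paper's proof runs differently and avoids both problems. It never splits $\Omega$. Instead, it replaces each rank-$\ge 2$ factor $D_j$ by its rank-$1$ characteristic symmetric subspace $\mathbb B^{q_j}$ (sitting inside a totally geodesic $\Delta^{r_j-1}\times\mathbb B^{q_j}\subset D_j$) and restricts $F$ to the resulting product of $k$ balls. The Mok--Tsai radial-limit/Fatou argument of Theorem \ref{thm_HI_polydisk_IrrBSD}, iterated factor by factor and using $\mathrm{rank}(\Omega)=\mathrm{rank}(D)$, pushes the image into a rank-$k$ characteristic symmetric subspace and ultimately places each ball factor $(\mathbb B^{q_j},\lambda_j g)$ isometrically into a rank-$1$ characteristic subspace (a maximal ball) of a single $\Omega_{l_j}$; the functional equation forces $\lambda_j/\mu_{l_j}\in\mathbb Z_{>0}$ and the Gauss equation forces $\lambda_j/\mu_{l_j}\le 1$, hence $\lambda_j=\mu_{l_j}$ and total geodesy of each such restriction. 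The second fundamental form of $F(D)$ is then shown to vanish direction by direction: ball-factor directions by the above, rank-$\ge 2$ directions and all cross terms by the arguments of \cite[Theorem 1.3.2]{Mok12}. If you want to salvage your outline, you should abandon the global splitting of $\Omega$ and instead prove the vanishing of $\sigma$ on each factor direction separately, which is where the characteristic-subspace slicing does the real work.
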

\begin{remark}
If we further make the assumption that all $D_j$, $1\le j\le k$, are of rank $\ge 2$ in Theorem \ref{HI_TotalGeodesy_EqRk}, then the result follows directly from Mok \cite[Theorem 1.3.2]{Mok12}.
\end{remark}
\section{Preliminary}
In the present article, we will always assume that a given bounded symmetric domain $\Omega\Subset \mathbb C^N$ is in its Harish-Chandra realization.
Let $\Omega=\Omega_1\times \cdots \times \Omega_m\Subset \mathbb C^N$ be a bounded symmetric domain, where $\Omega_j\Subset \mathbb C^{n_j}$, $1\le j\le m$, are irreducible bounded symmetric domains.
Then, it well-known that the Bergman kernel of $\Omega$ is given by
\[ K_\Omega(Z,\xi) = \prod_{j=1}^m K_{\Omega_j}(Z_j,\xi_j), \]
where $K_{\Omega_j}(Z_j,\xi_j)$ is the Bergman kernel of $\Omega_j$ for $1\le j\le m$, $Z=(Z_1,\ldots,Z_m)$ and $\xi=(\xi_1,\ldots,\xi_m)$ for $Z_j,\xi_j\in \mathbb C^{n_j}$, $1\le j\le m$.
In addition, for $1\le j\le m$, we also have
\[ K_{\Omega_j}(Z_j,\xi_j) = C_j h_{\Omega_j}(Z_j,\xi_j)^{-(p(\Omega_j)+2)} \]
for some positive real constant $C_j$ and for some polynomial $h_{\Omega_j}(Z_j,\xi_j)$ in $(Z_j,\overline{\xi_j})$, where $p(\Omega_j)$ is some positive integer depending only on $\Omega_j$ (cf. \cite{CM17}).
Given an irreducible bounded symmetric domain $D\Subset \mathbb C^n$, we let $g_D$ be the canonical K\"ahler-Einstein metric on $D$ normalized so that minimal disks in $D$ are of constant Gaussian curvature $-2$ (cf. \cite{CM17}).
For the notion of the minimal disks, we refer the readers to \cite{MT92}.
Denote by $\omega_{g_D}$ the corresponding K\"ahler form of $(D,g_D)$.
Then, from \cite{CM17} we have 
\[ \omega_{g_D}=-\sqrt{-1}\partial\overline\partial \log h_D(w,w), \]
where $w\in D\subset \mathbb C^n$ are the Harish-Chandra coordinates.
Moreover, we denote by $ds_U^2$ the Bergman metric of any bounded domain $U\Subset \mathbb C^n$.
We have the following lemma, which is one of the consequences of \cite[Theorem 4.25]{CXY17}.
\begin{lemma}(cf. Theorem 4.25 in \cite{CXY17})\label{lem:HI_is_Proper}
Let $F=(F_1,\ldots,F_m):(D_1,\lambda_1g_{D_1})\times\cdots \times (D_k,\lambda_k g_{D_k})\to (\Omega_1,\lambda'_1g_{\Omega_1})\times\cdots \times (\Omega_m,\lambda'_m g_{\Omega_m})$ be a holomorphic isometry, i.e.,
\[ \sum_{j=1}^m \lambda'_j F_j^* g_{\Omega_j}
= \bigoplus_{l=1}^k \lambda_l g_{D_l}, \]
where $D_l$, $1\le l\le k$, $\Omega_j$, $1\le j\le m$, are irreducible bounded symmetric domains in their Harish-Chandra realizations, and $\lambda_l$, $1\le l\le k$, $\lambda'_j$, $1\le j\le m$, are positive real constants.
Then, $F:D_1\times\cdots \times D_k \to \Omega_1\times\cdots \times \Omega_m$ is a proper holomorphic map.
\end{lemma}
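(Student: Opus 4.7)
The plan is to deduce from the isometry hypothesis a scalar functional equation
\[ \prod_{l=1}^k h_{D_l}(w_l, w_l)^{\lambda_l} \;=\; c\,\prod_{j=1}^m h_{\Omega_j}(F_j(w), F_j(w))^{\lambda'_j} \qquad \text{on } D, \]
for some positive constant $c$, and then to read off properness from the boundary behaviour of the $h$-polynomials: for every irreducible bounded symmetric domain $D$ in its Harish-Chandra realization, $h_D(w,w) > 0$ on $D$ and $h_D(w,w) \to 0$ as $w \to \partial D$, which follows from $K_D(w,w) = C_D\,h_D(w,w)^{-(p(D)+2)}$ together with the blow-up of the Bergman kernel at the boundary of a bounded domain.

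To establish the equation, note first that by post-composing $F$ with an automorphism of $\Omega$ (which acts transitively on $\Omega$) we may assume $F(0) = 0$, as this affects neither the isometric property (automorphisms preserve $g_{\Omega_j}$ factorwise) nor the properness question. Set $\Phi(w) := \prod_l h_{D_l}(w_l, w_l)^{\lambda_l}$ and $\Psi(w) := \prod_j h_{\Omega_j}(F_j(w), F_j(w))^{\lambda'_j}$. Applying $-\sqrt{-1}\partial\overline\partial\log$ and using $\omega_{g_{D_l}} = -\sqrt{-1}\partial\overline\partial \log h_{D_l}(w_l, w_l)$ factorwise gives $-\sqrt{-1}\partial\overline\partial\log\Phi = -\sqrt{-1}\partial\overline\partial\log\Psi$ on $D$. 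Since $D$ is simply connected and $\Phi,\Psi$ are smooth and strictly positive, there is a holomorphic $\Lambda\colon D \to \mathbb{C}$ with $\log\Phi - \log\Psi = \Lambda + \overline\Lambda$. In a neighbourhood of $0 \in D$, both sides are real-analytic in $(w, \overline w)$ and, via the principal branch of the logarithm, extend uniquely to holomorphic functions of $(w, \xi)$ with $\xi$ replacing $\overline w$ as an independent variable. Evaluating the polarized identity at $\xi = 0$ and using the Harish-Chandra normalization $h_{D_l}(w_l, 0) \equiv 1$ together with (thanks to $F(0) = 0$) the vanishing of the polarized $h_{\Omega_j}$-factor onto $h_{\Omega_j}(F_j(w), 0) \equiv 1$, the left-hand side vanishes identically in $w$. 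Thus $\Lambda(w) \equiv -\overline{\Lambda(0)}$ on that neighbourhood, and by analytic continuation on all of $D$. So $\Lambda$ is constant, and the displayed equation holds with $c := e^{2\operatorname{Re}\Lambda}$.

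Properness follows immediately. Given a compact $K \subset \Omega$ and a sequence $\{w^{(n)}\} \subset F^{-1}(K)$, continuity and positivity of each $h_{\Omega_j}$ on $\Omega_j$ yield a uniform lower bound $\Psi(w^{(n)}) \ge \varepsilon > 0$, whence $\Phi(w^{(n)}) = c\,\Psi(w^{(n)}) \ge c\varepsilon$. Since each $h_{D_l}$ is bounded on $D_l$ (as a polynomial on a bounded domain), this forces a uniform lower bound $h_{D_l}(w^{(n)}_l, w^{(n)}_l) \ge \delta > 0$ for every $l$; together with $h_{D_l} \to 0$ at $\partial D_l$, it follows that $\{w^{(n)}_l\}$ lies in a compact subset of $D_l$ for each $l$, hence $\{w^{(n)}\}$ lies in a compact subset of $D$, and $F^{-1}(K)$ is compact. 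The only subtle point, and the step I expect to be the main obstacle to write up carefully, is the legitimacy of the local sesqui-analytic polarization when the exponents $\lambda_l, \lambda'_j$ are merely real; this is fine because in a neighbourhood of $0$ both $\Phi$ and $\Psi$ are close to $1$ (using $F(0) = 0$ and $h_{D_l}(0, 0) = h_{\Omega_j}(0, 0) = 1$), so $\log\Phi$ and $\log\Psi$ are given by absolutely convergent power series in $(w, \overline w)$ with unique holomorphic extensions in $(w, \xi)$.
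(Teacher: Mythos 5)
Your argument is correct, but note that the paper itself offers no proof of this lemma: it is quoted as a consequence of Theorem 4.25 of \cite{CXY17}, so there is no internal proof to compare against. What you have written is essentially a self-contained reconstruction of the standard argument behind such properness statements. The two ingredients are exactly the right ones: (i) the Calabi--Mok polarization trick, using $\omega_{g_{D_l}}=-\sqrt{-1}\partial\overline\partial\log h_{D_l}$, the normalization $h(Z,0)\equiv 1$ in Harish--Chandra coordinates, and $F(0)=0$ after composing with an automorphism, to upgrade the $\partial\overline\partial$-identity to the scalar functional equation $\prod_l h_{D_l}(w_l,w_l)^{\lambda_l}=c\prod_j h_{\Omega_j}(F_j(w),F_j(w))^{\lambda'_j}$; and (ii) the boundary behaviour of the generic norm, namely $h_D(w,w)>0$ on $D$ and $h_D(w,w)=0$ on $\partial D$, which converts the functional equation into properness. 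Your handling of the real exponents via convergent power series of $\log$ near the origin and the identity theorem is the correct way to justify the polarization. The only point I would tighten is the justification of (ii): rather than invoking blow-up of the Bergman kernel at the boundary of a general bounded domain (which is not automatic without some hypothesis), it is cleaner to use that for a bounded symmetric domain in its Harish--Chandra realization the topological boundary is contained in the zero set of the generic norm $h_D$ (cf. \cite{Lo77}), or to note that the Bergman kernel of a bounded \emph{homogeneous} domain does blow up at every boundary point; combined with continuity of the polynomial $h_D$ on $\overline{D}$ this gives the compactness of the sublevel sets $\{h_D\ge\delta\}$ that your final paragraph needs.
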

\section{Holomorphic isometric embeddings between bounded symmetric domains with the same rank}
Motivated by the Polydisk Theorem (cf. \cite{Mok89, Wo72}), we first study holomorphic isometric embeddings from $\big(\Delta^r,{1\over 2}ds_{\Delta^r}^2\big)$ to $(\Omega,g_\Omega)$ for any irreducible bounded symmetric domain of rank $r\ge 2$.
Then, we obtain the following result, which is a special case of our main result, i.e., Theorem \ref{HI_TotalGeodesy_EqRk}.
\begin{theorem}\label{thm_HI_polydisk_IrrBSD}
Let $F:\big(\Delta^r,{1\over 2}ds_{\Delta^r}^2\big)\to (\Omega,g_\Omega)$ be a holomorphic isometric embedding, where $\Omega\Subset \mathbb C^N$ is an irreducible bounded symmetric domain of rank $r\ge 2$.
Then, $F$ is totally geodesic.
\end{theorem}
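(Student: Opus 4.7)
The plan is to exploit the fact that the metric normalization $\tfrac{1}{2}ds^2_{\Delta^r}$ places each disk factor of the polydisk exactly at the bottom of the holomorphic sectional curvature of $(\Omega, g_\Omega)$, and then to use the Polydisk Theorem to reduce the question to a rigidity statement between equal-rank polydisks.

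First, from the isometry condition $F^*\omega_{g_\Omega} = \omega_{\frac{1}{2}ds^2_{\Delta^r}}$ together with the Kähler potential formula $\omega_{g_\Omega} = -\sqrt{-1}\,\partial\bar\partial\log h_\Omega$, I would derive a pointwise identity
\[ h_\Omega(F(z),F(z)) = |H(z)|^2 \prod_{i=1}^r (1-|z_i|^2) \]
for some nonvanishing holomorphic function $H$ on $\Delta^r$. Polarizing and invoking Mok's algebraic extension theorem from \cite{Mok12} would promote this to a polynomial identity on a Zariski open subset of the product of the compact duals of $\Delta^r$ and $\Omega$. Lemma \ref{lem:HI_is_Proper} then ensures that $F$ is proper and sends $\partial\Delta^r$ into $\partial\Omega$, which is consistent with the vanishing of the right-hand side on each coordinate hyperplane $\{|z_i|=1\}$.

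Next I would restrict $F$ to each coordinate disk $\Delta_{(i)} := \{(0,\ldots,z_i,\ldots,0)\} \subset \Delta^r$. The resulting map $(\Delta,\tfrac{1}{2}ds^2_\Delta) \to (\Omega,g_\Omega)$ has induced Gaussian curvature $-2$, which equals the minimum of the holomorphic sectional curvature of $(\Omega,g_\Omega)$ (attained precisely along rank-one characteristic directions). By the standard Gauss equation for Kähler submanifolds, the second fundamental form of $F|_{\Delta_{(i)}}$ must then vanish identically, so each image $F(\Delta_{(i)})$ is a minimal disk of $\Omega$. In particular, the tangent vectors $\eta_i := (dF)_0(\partial/\partial z_i)$ form an orthonormal family of rank-one (characteristic) vectors in $T_{F(0)}\Omega$.

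The main obstacle, and the real content of the theorem, is to upgrade the mere orthogonality of $\eta_1,\ldots,\eta_r$ to Harish-Chandra strong orthogonality, so that the Polydisk Theorem produces a maximal totally geodesic polydisk $\Pi \cong \Delta^r \subset \Omega$ with $T_{F(0)}\Pi = \mathrm{span}_{\mathbb C}\{\eta_1,\ldots,\eta_r\}$. For this I would exploit the factorization in the functional equation above: the right-hand side is a product of $r$ irreducible linear factors, and matching this against the boundary stratification of the zero locus of $h_\Omega$ (whose top-dimensional components correspond to the rank-$(r-1)$ characteristic boundary faces of $\Omega$) should force the images of the $r$ coordinate hyperplanes $\{z_i=0\} \subset \Delta^r$ to land in $r$ distinct maximal boundary components of a common maximal polydisk, which at the level of tangent data at $F(0)$ is equivalent to strong orthogonality. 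Once $F(\Delta^r) \subset \Pi$ is established, the map $F:\Delta^r \to \Pi$ becomes a holomorphic isometry between two copies of $(\Delta^r,\tfrac{1}{2}ds^2_{\Delta^r})$ (since the induced metric on a maximal polydisk is precisely $\tfrac{1}{2}ds^2_{\Delta^r}$ under our normalization), hence an automorphism of $\Delta^r$; because $\Pi$ is totally geodesic in $\Omega$, the composition $F:\Delta^r \to \Pi \hookrightarrow \Omega$ is totally geodesic.
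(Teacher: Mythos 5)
Your proposal has a genuine gap at its central step: the claim that the Gauss equation forces the second fundamental form of $F|_{\Delta_{(i)}}$ to vanish. For a complex submanifold of a K\"ahler manifold the Gauss equation reads
\[ R^{\mathrm{sub}}_{\eta\overline\eta\eta\overline\eta} \;=\; R^{\Omega}_{\eta\overline\eta\eta\overline\eta} \;-\; \lVert\sigma(\eta,\eta)\rVert^2, \]
so holomorphic sectional curvature \emph{decreases} on submanifolds. With induced curvature $-2$ and the ambient holomorphic sectional curvature of $(\Omega,g_\Omega)$ ranging over $[-2,-2/r]$, you only obtain $\lVert\sigma(\eta_i,\eta_i)\rVert^2 = R^{\Omega}(\eta_i)+2\ge 0$, which carries no information unless you already know that $\eta_i=dF(\partial/\partial z_i)$ is a characteristic (rank-one) vector --- which is exactly what you then assert as a \emph{consequence}. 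The argument is circular, and the phenomenon it overlooks is real: there exist nonstandard holomorphic isometries of the disk into higher-rank targets (e.g.\ square-root-type maps into $\Delta^2$), so ``induced curvature equals the minimum of the ambient holomorphic sectional curvature'' cannot by itself force a disk slice to be totally geodesic. The subsequent step --- upgrading mere orthogonality of the $\eta_i$ to strong orthogonality by matching the factors $(1-|z_i|^2)$ against the boundary stratification of $\{h_\Omega=0\}$ so as to trap $F(\Delta^r)$ in a maximal polydisk $\Pi$ --- is, as you yourself note, the real content of the theorem, and it is only described, not carried out; as written it is a plan rather than an argument. (The final step is fine: once $F(\Delta^r)\subset\Pi$ with $\Pi$ totally geodesic, $F$ would indeed be totally geodesic.)

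The paper avoids both difficulties by a different mechanism, due to Mok--Tsai: one takes radial limits $F^*(e^{i\theta},w)$ on $\partial\Delta\times\Delta$ and recovers $F$ from them by the Cauchy integral, concluding that for each fixed $z_0$ the slice $F_{z_0}(\Delta)$ lies in a maximal characteristic symmetric subspace $\Omega'\cong\mathbb B^{n(\Omega)}$, which has \emph{constant} holomorphic sectional curvature $-2$; only at that point does the Gauss equation apply and force $F_{z_0}$ to be totally geodesic, and an induction on $\mathrm{rank}(\Omega)$ handles $r\ge 3$. To salvage your approach you would first need to establish this localization of the disk slices into rank-one (respectively rank-$(r-1)$) characteristic subdomains; the curvature comparison alone will not do it.
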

\begin{proof}
We prove by induction on the rank $r$ of the target irreducible bounded symmetric domain $\Omega$.

Firstly, we consider the case where $\mathrm{rank}(\Omega)=2$.
By Mok-Tsai \cite[Proposition 2.2]{MT92}, the radial limit $F^*(e^{i\theta},w)$ exists almost everywhere on $\partial\Delta \times \Delta$.
Writing $F_\theta(w):=F^*(e^{i\theta},w)$, Mok-Tsai \cite[pp.\;103-104]{MT92} showed that $F_\theta$ is holomorphic in $w$ by the Cauchy integral formula.
Then, $F_\theta(\Delta)$ lies in a maximal face of $\Omega$, which is biholomorphic to a complex unit ball $\mathbb B^{n(\Omega)}$, where $n(\Omega)$ is the null dimension of $\Omega$ (see \cite[p.\;105]{Mok89}).
As in Mok-Tsai \cite{MT92} and Tsai \cite[Proposition 1.1]{Ts93}, one further deduces from the Fatou's Theorem that
\[ F(z,w) = {1\over 2\pi\sqrt{-1}}\int_{\partial\Delta} {F^*(\xi,w)\over \xi - z} d\xi \]
(cf. Mok \cite{Mok07})
and thus for each $z_0\in \Delta$, $F_{z_0}(\Delta)$ lies in some maximal characteristic symmetric subspace $\Omega'\cong\mathbb B^{n(\Omega)}$ of $\Omega$, where $F_{z_0}(w):=F(z_0,w)$.
Moreover, $(\Omega',g_{\Omega}|_{\Omega'})\cong (\mathbb B^{n(\Omega)},g_{\mathbb B^{n(\Omega)}})$ is of constant holomorphic sectional curvature $-2$.
Then, for each $z_0\in \Delta$, we have a holomorphic isometry
$F_{z_0}:(\Delta,g_\Delta) \to (\Omega',g_{\Omega}|_{\Omega'})\cong (\mathbb B^{n(\Omega)},g_{\mathbb B^{n(\Omega)}})$, which is totally geodesic by the Gauss equation since both the domain and the target are of constant holomorphic sectional curvature $-2$.
Denote by $\sigma$ the $(1,0)$-part of the second fundamental form of $(F(\Delta^2),g_\Omega|_{F(\Delta^2)})$ in $(\Omega,g_\Omega)$.
Write $\Delta^2:=\Delta_1\times \Delta_2$ with $\Delta_j=\Delta$, $j=1,2$.
This shows that for any tangent vector $\alpha\in T_w(\Delta_2) \subset T_{(z,w)}(\Delta^2)$, we have $\sigma(\alpha',\alpha')=0$, where $\alpha':=dF_{(z,w)}(\alpha)$.
Similarly, for any tangent vector $\beta\in T_z(\Delta_1) \subset T_{(z,w)}(\Delta^2)$, we have $\sigma(\beta',\beta')=0$, where $\beta':=dF_{(z,w)}(\beta)$.
Moreover, we have $\sigma(\alpha',\beta')=0$.
These together imply that the second fundamental form $\sigma$ vanishes identically and thus $F$ is totally geodesic. The theorem is proved when the rank of the target irreducible bounded symmetric domain is equal to $r=2$.

Assume that the statement of the theorem holds true when the target irreducible bounded symmetric domain is of rank $r-1\ge 2$ for some $r\ge 3$.
Then, we consider the case where $\Omega$ is of rank $r\ge 3$.
The same arguments imply that for each $z_1\in \Delta$, the map $F_{z_1}:\Delta^{r-1} \to \Omega$ defined by $F_{z_1}(z_2,\ldots,z_r):=F(z_1,z_2,\ldots,z_r)$ has the image lying inside a maximal characteristic symmetric subspace $\Omega'$ of $\Omega$. It is well-known from \cite{Wo72} that $\Omega'$ is an irreducible bounded symmetric domain of rank $(r-1)$.
This induces a holomorphic isometry $F_{z_1}:(\Delta^{r-1},{1\over 2}ds_{\Delta^{r-1}}^2)\to (\Omega',g_{\Omega'})$ for $z_1\in \Delta$.
By the induction hypothesis, $F_{z_1}$ is totally geodesic for each $z_1\in \Delta$.
Applying the similar arguments as before, we see that $F$ is totally geodesic.
\end{proof}

Now, we are ready to prove Theorem \ref{HI_TotalGeodesy_EqRk}.

\begin{proof}[Proof of Theorem \ref{HI_TotalGeodesy_EqRk}]
Write $F:=(F_1,\ldots,F_m)$, where $F_l:D\to \Omega_l$, $1\le l\le m$, are holomorphic maps.
From the assumption, we have
$\sum_{l=1}^m \mu_l F_l^*g_{\Omega_l} = \bigoplus_{j=1}^k \lambda_j g_{D_j}$.
It follows from Lemma \ref{lem:HI_is_Proper} that $F$ is a proper holomorphic map.
For simplicity, we write $(\Omega,g'_\Omega)= (\Omega_1,\mu_1 g_{\Omega_1})\times\cdots \times (\Omega_m,\mu_m g_{\Omega_m})$.
Denote by $\sigma$ the $(1,0)$-part of the second fundamental form of $(F(D),g'_\Omega|_{F(D)})$ in $(\Omega,g'_\Omega)$.

If $\mathrm{rank}(D)=\mathrm{rank}(\Omega)=1$, then $D\cong \mathbb B^n$ and $\Omega\cong \mathbb B^m$ for some positive integers $n$ and $m$.
Note that for any $\lambda>0$ and any integer $N\ge 1$, $(\mathbb B^N,\lambda g_{\mathbb B^N})$ is of constant holomorphic sectional curvature $-{2\over \lambda}$.
Considering the holomorphic isometry $F :(\mathbb B^n,\lambda_1g_{\mathbb B^n})\to (\mathbb B^m,\mu_1g_{\mathbb B^m})$, we have $-{2\over \lambda_1}\le -{2\over \mu_1}$ and thus $0<{\lambda_1\over \mu_1}\le 1$ by the Gauss equation.
On the other hand, it follows from the functional equation for $F$ that ${\lambda_1\over \mu_1}$ is a positive integer (see \cite[Lemma 3]{CM17}).
Therefore, we have ${\lambda_1\over \mu_1}=1$, i.e., $\lambda_1=\mu_1$.
For any unit vector $\alpha\in T_x^{1,0}(F(\mathbb B^n))$ and any $x\in F(\mathbb B^n)$, we have
\[ \lVert \sigma(\alpha,\alpha)\rVert^2
=R_{\alpha\overline\alpha\alpha\overline\alpha}(\mathbb B^m,\mu_1g_{\mathbb B^m})
 - R_{\alpha\overline\alpha\alpha\overline\alpha}(F(\mathbb B^n),\mu_1g_{\mathbb B^m}|_{F(\mathbb B^n)})
= -{2\over \mu_1} - \left(-{2\over \lambda_1}\right) = 0 \]
because $\lambda_1=\mu_1$.
Therefore, we have $\sigma(\alpha,\alpha)=0$.
Then, for any $\alpha',\beta'\in T_x^{1,0}(F(\mathbb B^n))$, $x\in F(\mathbb B^n)$, we have
\[ 0=\sigma(\alpha'+\beta',\alpha'+\beta')
= \sigma(\alpha',\alpha')
+2\sigma(\alpha',\beta')
+\sigma(\beta',\beta') = 2\sigma(\alpha',\beta') \]
so that $\sigma(\alpha',\beta')=0$, i.e., $\sigma\equiv 0$.
Hence, $F$ is totally geodesic. 

On the other hand, if $\mathrm{rank}(D_j)\ge 2$ for $1\le j\le k$, then $F$ is totally geodesic by Mok \cite[Theorem 1.3.2]{Mok12}. From now on, we assume that $D$ is reducible, $\mathrm{rank}(D)=\mathrm{rank}(\Omega)\ge 2$ and some irreducible factor of $D$ is of rank $1$, equivalently biholomorphic to some complex unit ball.
That means we may write $D=\mathbb B^{q_1}\times \cdots \times \mathbb B^{q_{k_1}}\times D_{k_1+1}\times\cdots \times D_k$, where $D_j$, $k_1+1\le j\le k$, are irreducible bounded symmetric domains of rank at least $2$ for some integer $k_1\ge 1$.
For $k_1+1\le j\le k$, there is a totally geodesic complex submanifold $M_j$ of $D_j$ such that $M_j\cong \Delta^{r_j-1}\times \mathbb B^{q_j}$, where $r_j:=\mathrm{rank}(D_j)$ and $q_j$ denotes the complex dimension of the rank-$1$ characteristic symmetric subspace of $D_j$ for $k_1+1\le j\le k$ (cf. \cite{MT92, Ts93}).
Inductively, the arguments in the proof of Theorem \ref{thm_HI_polydisk_IrrBSD} show that the (proper) holomorphic isometry $F$ induces a holomorphic isometry $f:=F|_{\mathbb B^{q_1}\times\cdots \times \mathbb B^{q_{k_1}}\times \mathbb B^{q_{k_1+1}}\times\cdots \times \mathbb B^{q_k}}$ from
$(\mathbb B^{q_1},\lambda_1 g_{\mathbb B^{n_1}})\times\cdots \times (\mathbb B^{q_k},\lambda_k g_{\mathbb B^{q_k}})$ to
$(\Omega^{(k)},g'_{\Omega}|_{\Omega^{(k)}})$ for some rank-$k$ characteristic symmetric subspace $\Omega^{(k)}$ of $\Omega$.

Let $q'_l$ be the complex dimension of the rank-$1$ characteristic symmetric subspace of $\Omega_l$ for $1\le l\le m$.
By the same arguments as in the proof of Theorem \ref{thm_HI_polydisk_IrrBSD}, the map $f$ further induces holomorphic isometries from $(\mathbb B^{q_j},\lambda_j g_{\mathbb B^{q_j}})$ to $(\mathbb B^{q'_{l_j}},\mu_{l_j}g_{\mathbb B^{q'_{l_j}}})$ for some $l_j$ and for any $j$, $1\le j\le k_1$.
Similarly, $f$ induces holomorphic isometries from 
$(\mathbb B^{q_j},\lambda_j g_{\mathbb B^{q_j}})$ to $(\mathbb B^{q'_{l_j}},\mu_{l_j}g_{\mathbb B^{q'_{l_j}}})$ for some $l_j$ and for any $j$, $k_1+1\le j\le k$.
In addition, we have $\lambda_j=\mu_{l_j}$, $1\le j\le k$, by the functional equation and the Gauss equation as before.
Then, these induced holomorphic isometries from 
$(\mathbb B^{q_j},\lambda_j g_{\mathbb B^{q_j}})$ to $(\mathbb B^{q'_{l_j}},\mu_{l_j}g_{\mathbb B^{q'_{l_j}}})$ are totally geodesic by the arguments as before.
More precisely, for $1\le j\le k$, let $\iota_j:\mathbb B^{q_j}\hookrightarrow \mathbb B^{q_1}\times\cdots \times \mathbb B^{q_k}$ be the standard embedding $\iota_j(Z^j)=(Z^1_0,\ldots,Z^{j-1}_0,Z^j,Z^{j+1}_0,\ldots,Z^k_0)$ for fixed $Z^l_0\in \mathbb B^{q_l}$, $1\le l\le k$ and $l\neq j$.
Then, $f\circ\iota_j$ is totally geodesic for $1\le j\le k$ and thus $f$ is totally geodesic.

For each $x=F(Z^1,\ldots,Z^{k_1},W^{k_1+1},\ldots,W^k)\in S:=F(D)$, we have
\[ \sigma(\alpha'_j,\alpha'_j)=0\qquad \forall\;\alpha'_j=dF_{(Z,W)}(\alpha_j),\;\alpha_j\in T_{Z^j}^{1,0}(\mathbb B^{q_j})\subset T_{(Z,W)}^{1,0}(D), \]
for $1\le j\le k_1$, where $(Z,W)$ $=$ $(Z^1,\ldots,Z^{k_1},W^{k_1+1},\ldots,W^k)\in D=\mathbb B^{q_1}\times \cdots \times \mathbb B^{q_{k_1}}\times D_{k_1+1}\times\cdots \times D_k$.
On the other hand, from \cite{Mok12} we have
\[ \sigma(\beta'_j,\beta'_j)=0\qquad \forall\;\beta'_j=dF_{(Z,W)}(\beta_j),\;\beta_j\in T_{W^j}^{1,0}(D_j)\subset T_{(Z,W)}^{1,0}(D), \]
because $\mathrm{rank}(D_j)\ge 2$, $k_1+1\le j\le k$.
Moreover, from the arguments in the proof of \cite[Theorem 1.3.2]{Mok12} we have
\[ \sigma(\eta'_i,\eta'_j)=0 \]
for $i\neq j$, $1\le i,j\le k$, where $\eta'_\mu\in T_{F(Z,W)}^{1,0}(F(D))$ is the image of some tangent vector of the $\mu$-th direct factor $D_\mu$ of $D$ for $1\le \mu\le k$.
Hence, the second fundamental form vanishes identically and thus $F$ is totally geodesic.
\end{proof}

Through the discussion with Professor Wing-Keung To, we actually raised the following problem about the structure of holomorphic isometric embeddings from a reducible bounded symmetric domain to an irreducible bounded symmetric domain of higher rank.

\begin{Problem}\label{Problem_ReBSDtoIRRBSD2}
Let $D=D_1\times\cdots \times D_k$ be a reducible bounded symmetric domain and $\Omega\Subset \mathbb C^N$ be an irreducible bounded symmetric domain of rank $\ge 2$, where $D_j$, $1\le j\le k$, are the irreducible factors of $D$.
Let $F:D \to \Omega$ be a holomorphic isometric embedding from $(D_1, g_{D_1})\times\cdots \times (D_k,g_{D_k})$ to $(\Omega,g_\Omega)$.
Then, does $F(D)$ lie inside some totally geodesic product subdomain $(\Pi,g_\Omega|_{\Pi})\cong (\Omega',g'_{\Omega'}):=(\Omega'_1,g_{\Omega'_1})\times\cdots \times (\Omega'_k,g_{\Omega'_k})$ in $\Omega$ such that $\mathrm{rank}(\Omega')=\mathrm{rank}(\Omega)$?
More precisely, can $F$ be factorized as
\[ F(Z_1,\ldots,Z_k)=\iota(\hat F_1(Z_1),\ldots,\hat F_k(Z_k)) \]
for $(Z_1,\ldots,Z_k)\in D_1\times\cdots \times D_k$, where $\hat F_j:(D_j,g_{D_j})\to (\Omega'_j,g_{\Omega'_j})$, $1\le j\le k$, are holomorphic isometries and $\iota:(\Omega',g'_{\Omega'})\hookrightarrow (\Omega,g_\Omega)$ is the totally geodesic holomorphic isometric embedding?
\end{Problem}
\begin{remark}
Problem \ref{Problem_ReBSDtoIRRBSD2} has been solved by Mok \cite{Mok12} if all irreducible factors $D_j$ of $D$ are of rank $\ge 2$ since the holomorphic isometry $F$ is totally geodesic in this case.
However, some irreducible factor $D_j$ of $D$ could be of rank $1$ in Problem \ref{Problem_ReBSDtoIRRBSD2}, i.e., $D_j\cong \mathbb B^{n_j}$ for some positive integer $n_j$.
In addition, Problem \ref{Problem_ReBSDtoIRRBSD2} is solved when $\Omega$ is of rank $2$ by Theorem \ref{HI_TotalGeodesy_EqRk}.
\end{remark}

\begin{center}
\textsc{Acknowledgements}
\end{center}
The main part of this work was done during the period that the author visited The University of Hong Kong in the summer of 2017.
The author would like to express his gratitude to Professors Ngaiming Mok and Wing-Keung To for helpful discussions on the topic of holomorphic isometric embeddings between (reducible) bounded symmetric domains.

\end{document}